\begin{document}

\title[A Functional Integro-differential Fractional Equation]{Existence
and Uniqueness of Solution\\ to a Functional Integro-differential\\ Fractional Equation}


\author[M. R. Sidi Ammi, E. H. El Kinani, D. F. M. Torres]{Moulay Rchid Sidi Ammi,
El Hassan El Kinani, Delfim F. M. Torres}


\address{Moulay Rchid Sidi Ammi \newline
Faculty of Sciences and Techniques,
Moulay Ismail University, AMNEA Group, \newline
Errachidia, Morocco}
\email{sidiammi@ua.pt}


\address{El Hassan El Kinani \newline
Faculty of Sciences and Techniques,
Moulay Ismail University, GAA Group, \newline
Errachidia, Morocco}
\email{\url{elkinani_67@yahoo.com}}


\address{Delfim F. M. Torres \newline
Center for Research and Development in Mathematics and Applications (CIDMA), \newline
Department of Mathematics,
University of Aveiro,
Campus Universit\'{a}rio de Santiago,
3810-193 Aveiro, Portugal}
\email{delfim@ua.pt}


\thanks{Submitted 27-Jan-2012; accepted  18-Jun-2012; 
for publication in the Electronic Journal of Differential Equations
(\url{http://ejde.math.txstate.edu}).}

\subjclass[2010]{26A33; 47H10}

\keywords{Riemann--Liouville operators, fractional calculus,
fixed point theorem, Lipschitz conditions}


\begin{abstract}
We prove, using a fixed point theorem in a Banach algebra,
an existence result for a fractional functional differential
equation in the Riemann--Liouville sense. Dependence of
solutions with respect to initial data and an uniqueness result
are also derived.
\end{abstract}


\maketitle

\numberwithin{equation}{section}

\newtheorem{theorem}{Theorem}[section]
\newtheorem{lemma}[theorem]{Lemma}
\newtheorem{definition}[theorem]{Definition}
\newtheorem{corollary}[theorem]{Corollary}

\allowdisplaybreaks


\section{Introduction}

Fractional Calculus is a generalization of ordinary
differentiation and integration to arbitrary (non-integer) order.
The subject has its origin in the 1600s. During three centuries,
the theory of fractional calculus developed as a pure theoretical
field, useful only for mathematicians. In the last few decades,
however, fractional differentiation proved very useful in various
fields of applied sciences and engineering: physics (classic and
quantum mechanics), chemistry, biology, economics, signal and
image processing, calculus of variations, control theory,
electrochemistry, visco-elasticity, feedback amplifiers, and
electrical circuits
\cite{MR2558546,gkk,hil,kst,book:Ortigueira,sat,skm,srs,rhfc}. The
``bible'' of fractional calculus is the book of Samko, Kilbas and
Marichev \cite{skm}.

Several definitions of fractional derivatives are available in the
literature, including the Riemann--Liouville, Grunwald--Letnikov,
Caputo, Riesz, Riesz--Caputo, Weyl, Hadamard, and Chen derivatives
\cite{kli,mst,mir,pod1,rsl,skm}. The most common used fractional
derivative is the Riemann--Liouville \cite{MR2821461,rsl,skm,Russ},
which we adopt here. It is worth to mention that functions that have no
first order derivative might have Riemann--Liouville fractional
derivatives of all orders less than one \cite{rsl}. Recently, the
physical meaning of the initial conditions to fractional
differential equations with Riemann--Liouville derivatives has
been discussed \cite{gia,MyID:181,pod1}.

Using a fixed point theorem, like Schauder's fixed point theorem,
and the Banach contraction mapping principle, several results of
existence have been obtained in the literature to linear and
nonlinear equations, and recently also to fractional differential
equations. The interested reader is referred to
\cite{sayed,agarwal1,agarwal,st}.

Let $I_{0}=[-\delta, 0]$ and $I=[0,T]$ be two closed and bounded
intervals in $\mathbb{R}$; $B(I, \mathbb{R})$ be the space of
bounded real-valued functions on $I$; and $C=C(I_0, \mathbb{R})$
be the space of continuous real valued functions on $I_{0}$. Given
a function $\phi \in C$, we consider the functional
integro-differential fractional equation
\begin{equation}
\label{eq1}
\frac{d^{\alpha}}{dt^{\alpha}} \left[ \frac{x(t)}{f(t,
x(t))} \right] = g \left( t, x_{t}, \int_{0}^{t}k(s, x_{s}) ds
\right) \, a.e., \quad t \in I,
\end{equation}
subject to
\begin{equation}
\label{equa1}
x(t)= \phi (t), \quad t \in I_{0},
\end{equation}
where $d^{\alpha}/dt^{\alpha}$ denotes the Riemann--Liouville
derivative of order $\alpha$, $0 < \alpha < 1$,
and $x_{t}: I_{0}\rightarrow \mathbb{C}$ is the continuous function
defined by $x_{t}(\theta)=x(t+\theta)$ for all $\theta \in I_{0}$,
under suitable mixed Lipschitz and other conditions on the
nonlinearities $f$ and $g$. For a motivation to study such type of
problems we refer to \cite{sayed}. Here we just mention that
problems of type \eqref{eq1}--\eqref{equa1} seem important in the
study of dynamics of biological systems \cite{sayed}.

Our main aim is to prove existence of solutions for
\eqref{eq1}--\eqref{equa1}. This is done in
Section~\ref{sec:exist} (Theorem~\ref{thm2}). Our main tool is a
fixed point theorem that is often useful in proving existence
results for integral equations of mixed type in Banach algebras
\cite{dhage}, and which we recall in Section~\ref{sec:prelim}
(Theorem~\ref{thm1}). We end with Section~\ref{sec4}, by proving
dependence of the solutions with respect to their initial
values (Theorem~\ref{thm:unif:stab}) and, consequently,
uniqueness to \eqref{eq1}--\eqref{equa1} (Corollary~\ref{cor:unique}).


\section{Preliminaries}
\label{sec:prelim}

In this section we give the notations, definitions, hypotheses and
preliminary tools, which will be used in the sequel. We deal with
the (left) Riemann--Liouville fractional derivative, which is
defined in the following way.

\begin{definition}[\cite{skm}]
\label{def1} The fractional integral of order $\alpha \in (0,1)$
of a function $f \in L^1[0,T]$ is defined by
\[
I^{\alpha} f(t) = \int_0^t \frac{(t - s)^{ \alpha-1}}{\Gamma(
\alpha)} f(s)\, ds,
\]
$t \in [0,T]$, where $\Gamma$ is the Euler gamma function. The
Riemann--Liouville fractional derivative operator of order
$\alpha$ is then defined by
$$
\frac{d^{\alpha}}{dt^{\alpha}} :=\frac{d}{dt} \circ I^{1-\alpha} .
$$
\end{definition}

Along the paper, $X$ denotes a Banach algebra with norm $\| \cdot
\|$. The space $C(I, \mathbb{R})$ of all continuous functions
endowed with the norm $\|x\|= \sup_{t \in I}|x(t)|$ is a Banach
algebra. To prove the existence result for
\eqref{eq1}--\eqref{equa1}, we shall use the following fixed point
theorem.

\begin{theorem}[\cite{dhage}]
\label{thm1} Let $B_{r}(0)$ and $\overline{B_{r}(0)}$ be,
respectively, open and closed balls in a Banach algebra $X$
centered at origin 0 and of radius $r$. Let $A, B:
\overline{B_{r}(0)} \rightarrow X$ be two operators satisfying:
 \begin{itemize}

\item[(a)] $A$ is Lipschitz with Lipschitz constant $L_{A}$,

\item[(b)] B is compact and continuous, and

\item[(c)] $L_{A} M < 1$, where
$M= \|B(\overline{B_{r}(0)})\| := \sup \{ \|Bx\|; x \in
\overline{B_{r}(0)} \}$.
\end{itemize}
Then, either
\begin{itemize}

\item[(i)] the equation $\lambda [AxBx]=x$ has a solution for
$\lambda =1$, or

\item[(ii)] there exists  $x \in X$ such that $\|x\|=r$,
$\lambda [AxBx]=x$ for some $0< \lambda <1$.
\end{itemize}
\end{theorem}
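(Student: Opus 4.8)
The plan is to turn the product equation $x = Ax\cdot Bx$ into a fixed point problem for a single completely continuous operator by absorbing the Lipschitz operator $A$ via the Banach contraction principle, and then to invoke a Leray--Schauder-type nonlinear alternative. Fix $\lambda\in[0,1]$ and $y\in\overline{B_r(0)}$. By (a) and (b), the map $x\mapsto \lambda\,(Ax)(By)$ is Lipschitz on $X$ with constant at most $\lambda L_A\|By\|\le L_A M$, which is $<1$ by hypothesis (c); since $X$ is complete, this map has a unique fixed point, denoted $\tau_\lambda(y)$, so that $\tau_\lambda(y)=\lambda\,(A\tau_\lambda(y))(By)$. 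Uniqueness of this fixed point yields the key equivalence: $y=\tau_\lambda(y)$ if and only if $\lambda[AyBy]=y$. In particular $\tau_0\equiv 0$, while a fixed point of $\tau_1$ is precisely a solution of alternative (i) with $\lambda=1$.

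Next I would show that $\Phi\colon [0,1]\times\overline{B_r(0)}\to X$, $\Phi(\lambda,y)=\tau_\lambda(y)$, is completely continuous. Subtracting the identities defining $\tau_\lambda(y)$ and $\tau_{\lambda'}(y')$ and inserting the intermediate terms $\lambda\,(A\tau_{\lambda'}(y'))(By)$ and $\lambda'\,(A\tau_{\lambda'}(y'))(By)$ gives the uniform estimate
\[
(1-L_A M)\,\|\tau_\lambda(y)-\tau_{\lambda'}(y')\|\le \|A\tau_{\lambda'}(y')\|\,\bigl(M\,|\lambda-\lambda'|+\|By-By'\|\bigr).
\]
Since $A$ is Lipschitz and the fixed points are a priori norm-bounded (take norms in $\tau_\lambda(y)=\lambda\,(A\tau_\lambda(y))(By)$ to get $\|\tau_\lambda(y)\|\le M\|A(0)\|/(1-L_A M)$), the factor $\|A\tau_{\lambda'}(y')\|$ is uniformly bounded, so continuity of $\Phi$ follows from continuity of $B$. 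For relative compactness of the range, note that $\tau_\lambda(y)$ depends on $y$ only through $z:=By$: writing $w(\lambda,z)$ for the unique fixed point of $x\mapsto \lambda\,(Ax)z$, the same estimate shows $w$ is continuous on the compact set $[0,1]\times\overline{B(\overline{B_r(0)})}$ (compact because $B$ is compact), so $\Phi$ takes values in the relatively compact set $w\bigl([0,1]\times\overline{B(\overline{B_r(0)})}\bigr)$.

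With $\Phi$ a completely continuous homotopy on $\overline{B_r(0)}$ satisfying $\Phi(0,\cdot)\equiv 0$ and $0\in B_r(0)$, the Leray--Schauder nonlinear alternative (equivalently, homotopy invariance of the degree of $I-\Phi(\lambda,\cdot)$) gives: either $\Phi(1,\cdot)=\tau_1$ has a fixed point in $\overline{B_r(0)}$, or there exist $\lambda\in(0,1)$ and $x$ with $\|x\|=r$ such that $x=\Phi(\lambda,x)=\tau_\lambda(x)$. The value $\lambda=0$ is excluded because it would force $x=\Phi(0,x)=0$, contradicting $\|x\|=r$, and a boundary fixed point at $\lambda=1$ already falls under the first case. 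Translating back through the equivalence of the first paragraph, the first case is exactly alternative (i) and the second is exactly alternative (ii).

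The main obstacle is the compactness of $\Phi$: since $A$ is only Lipschitz, neither $A$ nor the product map $x\mapsto (Ax)(Bx)$ need be compact, so the naive homotopy $(\lambda,x)\mapsto \lambda\,(Ax)(Bx)$ cannot be handed to Leray--Schauder directly --- this is precisely why the statement is delicate. The device that saves the argument is the implicit reparametrization above, which confines all the $B$-dependence to the compact set $\overline{B(\overline{B_r(0)})}$ and makes the associated implicit map Lipschitz-continuous through the uniform contraction constant $L_A M<1$; verifying the estimate and the resulting continuity and compactness is the technical core. A minor point to settle is that the contraction $x\mapsto \lambda\,(Ax)(By)$ should act on a complete space, which is automatic when $A$ is defined on all of $X$ (the natural setting) or can be arranged by a Lipschitz extension; in any case the relevant fixed points are confined to the ball $\{\|x\|\le M\|A(0)\|/(1-L_A M)\}$.
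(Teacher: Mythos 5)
The paper does not actually prove Theorem~\ref{thm1}: it is quoted from Dhage \cite{dhage} and used as a black box, so there is no in-paper proof to compare yours against. Your argument is nonetheless a correct and essentially complete proof of the quoted result, and it follows the standard route for such hybrid fixed point theorems in Banach algebras: freeze $By$, use the contraction $x\mapsto\lambda\,(Ax)(By)$ (admissible because the algebra norm is submultiplicative, so the Lipschitz constant is at most $\lambda L_A\|By\|\le L_AM<1$) to define the implicit operator $\tau_\lambda$, note that fixed points of $\tau_\lambda$ are exactly solutions of $\lambda[AxBx]=x$, establish complete continuity of $(\lambda,y)\mapsto\tau_\lambda(y)$ via the uniform estimate $(1-L_AM)\|\tau_\lambda(y)-\tau_{\lambda'}(y')\|\le\|A\tau_{\lambda'}(y')\|\bigl(M|\lambda-\lambda'|+\|By-By'\|\bigr)$, and conclude with the Leray--Schauder alternative; your device of factoring the map through $w(\lambda,z)$ on the compact set $[0,1]\times\overline{B(\overline{B_r(0)})}$ is a clean way to get compactness, since neither $A$ nor $x\mapsto(Ax)(Bx)$ is compact. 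The only genuine loose end is the one you flag yourself: as stated, $A$ is defined only on $\overline{B_r(0)}$, so $x\mapsto\lambda\,(Ax)(By)$ need not be a self-map of a complete set and $\tau_\lambda(y)$ may fail to exist; moreover a Lipschitz extension of a Banach-space-valued map with controlled constant is not available in general, so strictly speaking your proof covers the setting where $A$ is defined on all of $X$ (which is how Dhage states the result and which holds in this paper's application, where $Ax(t)=f(t,x(t))$ makes sense for every $x\in C(I,\mathbb{R})$). That is a defect of the quoted statement rather than of your argument.
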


Throughout the paper, we assume the following hypotheses:
\begin{itemize}

\item[(H1)] Function $f: I \times \mathbb{R} \rightarrow
\mathbb{R} \setminus \{0\}$ is Lipschitz and there exists a
positive constant $L$ such that for all $x, y \in \mathbb{R}$,
$|f(t, x)-f(t, y)| \leq L |x-y|$ a.e., $t \in I$.
\item[(H2)] Function $k: I \times C \rightarrow
\mathbb{R}$ is continuous and there exists a function $\beta \in
L^{1}(I, \mathbb{R}^{+})$ such that $|k(s,y)| \leq \beta (s)
\|y\|$ a.e., $s \in I, y \in C$.

\item[(H3)] There exists a continuous function $\gamma \in
L^{\infty}(I, \mathbb{R}^{+})$ and a contraction  $\psi :
\mathbb{R}^{+}\rightarrow \mathbb{R}^{+}$ with contraction
constant $<1$ and $\psi(0)=0$ such that for $x \in C$ and $y \in
\mathbb{R}$, $g(t, x, y) \leq \gamma(t) \psi(\|x\|+ |y|)$  a.e.,
$t \in I$.

\item[(H4)] Function $k$ is Lipschitz with respect to the second variable
with Lipschitz constant $L_{k}$: $|k(s, x)-k(s, y)| \leq
L_{k}\|x-y\|$ for $s\in I, x, y \in C$.

\item[(H5)] There exist $L_{1}$ and $L_{2}$ such
that for $x_{1}$, $x_{2} \in C$, $y_{1}$, $y_{2} \in \mathbb{R}$
and $s\in I$,
$$
\left|g(s, x_{1}, y_{1})-g(s, x_{2}, y_{2})\right| \leq L_{1}
\left\|x_{1}-x_{2}\right\| + L_{2} \left|y_{1}-y_{2}\right|.
$$
\end{itemize}


\section{Existence of solution}
\label{sec:exist}

We prove existence of a solution to \eqref{eq1}--\eqref{equa1}
under hypotheses (H1)--(H5).

\begin{theorem}
\label{thm2}
Suppose that hypotheses (H1)--(H5) hold. Assume
there exists a real number $r>0$ such that
\begin{equation}
\label{eq3} r> \frac{\frac{F T^{\alpha}}{\Gamma( \alpha +1
)}\sup_{s \in (0, T)} \gamma (s) (1+\|\beta\|_{L^{1}} ) \psi (
r)}{ 1- \frac{LT^{\alpha}}{\Gamma( \alpha +1 )} \sup_{s \in (0,
T)} \gamma(s) (1+\|\beta\|_{L^{1}} ) \psi ( r)},
\end{equation}
where
$$
1-\frac{LT^{\alpha}}{\Gamma( \alpha +1 )} \sup_{s \in (0, T)}
\gamma(s) (1+\|\beta\|_{L^{1}} ) \psi (r) > 0, \quad F=\sup_{t \in
[0, T]}|f(t, 0)|.
$$
Then, problem \eqref{eq1}--\eqref{equa1} has a solution on $I$.
\end{theorem}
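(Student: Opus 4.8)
The strategy is to recast the fractional problem \eqref{eq1}--\eqref{equa1} as a fixed point equation of the form $x = AxBx$ on a closed ball of the Banach algebra $C(I,\mathbb{R})$, and then to apply Dhage's fixed point theorem (Theorem~\ref{thm1}). The first step is to integrate \eqref{eq1}: applying $I^{\alpha}$ and using $\frac{d^{\alpha}}{dt^{\alpha}} = \frac{d}{dt}\circ I^{1-\alpha}$ together with the side condition \eqref{equa1}, one obtains the equivalent integral equation
\[
x(t) = f(t,x(t))\left[ c_0 + \frac{1}{\Gamma(\alpha)}\int_0^t (t-s)^{\alpha-1} g\Bigl(s, x_s, \int_0^s k(\tau, x_\tau)\,d\tau\Bigr) ds \right], \quad t \in I,
\]
with $x(t)=\phi(t)$ on $I_0$, where the constant term comes from the initial data (with $c_0$ adjusted so the trace at $t=0$ matches $\phi(0)$; for the purely formal Riemann--Liouville integration one typically takes this constant to be determined by $\phi$). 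I would then \emph{define} the two operators: $(Ax)(t) = f(t,x(t))$ and $(Bx)(t) = c_0 + \frac{1}{\Gamma(\alpha)}\int_0^t (t-s)^{\alpha-1} g\bigl(s, x_s, \int_0^s k(\tau, x_\tau)\,d\tau\bigr) ds$, extended appropriately on $I_0$, so that solving the problem is the same as solving $x = AxBx$, i.e.\ case (i) of Theorem~\ref{thm1} with $\lambda = 1$.

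Next I would verify the three hypotheses of Theorem~\ref{thm1} on the closed ball $\overline{B_r(0)}$ with $r$ as in \eqref{eq3}. For (a): by (H1), $|(Ax)(t) - (Ay)(t)| = |f(t,x(t)) - f(t,y(t))| \le L|x(t)-y(t)| \le L\|x-y\|$, so $A$ is Lipschitz with $L_A = L$. For (b): continuity of $B$ follows from continuity of $g$ and $k$ (H2, hypotheses), dominated convergence, and the integrability of the kernel $(t-s)^{\alpha-1}$; compactness of $B(\overline{B_r(0)})$ follows from the Arzel\`a--Ascoli theorem---uniform boundedness via (H2)--(H3), and equicontinuity because the singular kernel $\frac{1}{\Gamma(\alpha)}\int_0^t(t-s)^{\alpha-1}ds = \frac{t^\alpha}{\Gamma(\alpha+1)}$ is (H\"older) continuous in $t$ uniformly on the ball. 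For (c): estimating $\|Bx\|$ using (H2) and (H3) (noting $\|x_s\| \le \|x\|$ and $\bigl|\int_0^s k(\tau,x_\tau)d\tau\bigr| \le \|\beta\|_{L^1}\|x\|$), one gets $M = \|B(\overline{B_r(0)})\| \le \frac{T^\alpha}{\Gamma(\alpha+1)}\sup_s\gamma(s)(1+\|\beta\|_{L^1})\psi(r)$ (plus the constant term, which the hypothesis on $F$ absorbs), and then $L_A M = L\,M < 1$ is exactly the positivity condition displayed after \eqref{eq3}.

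Finally, Theorem~\ref{thm1} gives either (i) a solution of $x = AxBx$, which is what we want, or (ii) an $x$ with $\|x\|=r$ and $\lambda[AxBx]=x$ for some $0<\lambda<1$. To exclude (ii), I would assume such an $x$ exists and estimate $\|x\|$: from $x = \lambda AxBx$ and the bounds above, $|x(t)| \le |f(t,x(t))|\cdot\|Bx\| \le (L\|x\| + F)\cdot \frac{T^\alpha}{\Gamma(\alpha+1)}\sup_s\gamma(s)(1+\|\beta\|_{L^1})\psi(\|x\|)$, hence with $\|x\|=r$,
\[
r \le \frac{\frac{FT^\alpha}{\Gamma(\alpha+1)}\sup_s\gamma(s)(1+\|\beta\|_{L^1})\psi(r)}{1 - \frac{LT^\alpha}{\Gamma(\alpha+1)}\sup_s\gamma(s)(1+\|\beta\|_{L^1})\psi(r)},
\]
contradicting the strict inequality \eqref{eq3}. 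Therefore alternative (ii) cannot occur, so (i) holds and \eqref{eq1}--\eqref{equa1} has a solution on $I$. The main obstacle I anticipate is the verification of compactness/equicontinuity of $B$ near $t=0$ because of the weak singularity of the kernel $(t-s)^{\alpha-1}$; this is handled by splitting the integral and using that $\int_0^t (t-s)^{\alpha-1}ds$ is continuous, together with the uniform bound on the integrand coming from (H2)--(H3). A secondary point requiring care is the precise treatment of the initial condition when converting \eqref{eq1} to integral form, since Riemann--Liouville problems carry their initial data in a fractional-integral trace rather than a pointwise value.
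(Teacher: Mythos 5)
Your proposal follows essentially the same route as the paper: the same decomposition $Ax(t)=f(t,x(t))$, $Bx(t)=I^{\alpha}\bigl(g(t,x_{t},\int_{0}^{t}k(s,x_{s})\,ds)\bigr)$, verification of the Lipschitz, compactness, and $L_{A}M<1$ conditions of Theorem~\ref{thm1}, and exclusion of alternative (ii) by the very estimate that contradicts \eqref{eq3}. The only cosmetic difference is your extra constant $c_{0}$ in the integral reformulation (the paper takes the complementary term to vanish), which does not change the argument.
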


\begin{proof}
Let $X= C(I, \mathbb{R})$. Define an open ball $B_{r}(0)$ centered
at origin and of radius $r>0$, which satisfies \eqref{eq3}. It
is easy to see that $x$ is a solution to \eqref{eq1}--\eqref{equa1}
if and only if it is a solution of the integral equation
\begin{equation*}
x(t)= f(t, x(t)) I^{\alpha}\left( g \left( t, x_{t}, \int_{0}^{t}
k(s, x_{s}) ds \right) \right).
\end{equation*}
In other terms,
\begin{equation}
\label{eq4} x(t) = f(t, x(t)) \int_0^t \frac{(t - s)^{
\alpha-1}}{\Gamma( \alpha)} g\left(s, x_{s}, \int_{0}^{s} k(\tau,
x_{\tau}) d\tau \right)ds.
\end{equation}
Integral equation \eqref{eq4} is equivalent to the operator
equation $Ax(t) Bx(t)= x(t)$, $t \in I$, where $A, B:
\overline{B_{r}(0)} \rightarrow X$ are defined by
$$
Ax(t)= f(t, x(t)) \ \mbox{ and }  \ Bx(t)= I^{\alpha}\left(g
\left( t, x_{t}, \int_{0}^{t} k(s, x_{s}) ds \right) \right).
$$
We need to prove that the operators $A$ and $B$ verify the
hypotheses of Theorem~\ref{thm1}. To continue the proof of
Theorem~\ref{eq3}, we make use of two technical lemmas.

\begin{lemma}
\label{lem31} The operator $A$ is Lipschitz on $X$.
\end{lemma}

\begin{proof}
Let $x, y \in X$ and $t \in I$. By (H1) we have
\begin{equation*}
|Ax(t)-Ay(t)| = |f(t, x)-f(t, y)| \leq L |x(t)-y(t)| \leq L
\|x-y\|.
\end{equation*}
Then, $\|Ax-Ay \| \leq  L \|x-y\|$ and it follows that $A$ is
Lipschitz on $X$ with Lipschitz constant $L$.
\end{proof}

\begin{lemma}
\label{lem33} The operator $B$ is completely continuous on $X$.
\end{lemma}

\begin{proof}
We prove that $B(\overline{B_{r}(0)})$ is an uniformly bounded and
equicontinuous set in $X$. Let $x$ be arbitrary in
$\overline{B_{r}(0)}$. By hypotheses (H2)--(H4) we have
\begin{align*}
|Bx(t)| & \leq  I^{\alpha}\left(
\left|g \left( t, x_{t}, \int_{0}^{t}k(s, x_{s}) ds \right)\right| \right)\\
& \leq  \frac{1}{\Gamma( \alpha)} \int_{0}^{t} (t - s)^{ \alpha-1}
\left|g \left( s, x_{s}, \int_{0}^{s}k(\tau, x_{\tau})d \tau \right) \right| ds\\
& \leq  \frac{1}{\Gamma( \alpha)} \int_{0}^{t} (t - s)^{ \alpha-1}
\gamma(s) \psi\left(\|x_{s}\|+ \int_{0}^{s} \beta (\tau)\|x_{\tau}\| d \tau \right) ds\\
& \leq  \frac{1}{\Gamma( \alpha)} \int_{0}^{t}
(t - s)^{ \alpha-1} \gamma (s) (1+\|\beta\|_{L^{1}}) \psi(r) ds\\
& \leq \frac{\sup_{s\in [0, T]} \gamma(s)}{\Gamma( \alpha)}
\left(1+\|\beta\|_{L^{1}}\right) \psi(r) \int_{0}^{t} (t - s)^{ \alpha-1} ds\\
& \leq \frac{\sup_{s\in [0, T]} \gamma(s)}{\Gamma( \alpha +1)}
\left(1+\|\beta\|_{L^{1}}\right) \psi(r) T^{\alpha}.
\end{align*}
Taking the supremum over $t$, we get $\|Bx\| \leq M$ for all $x
\in \overline{B_{r}(0)}$, where
$$
M= \frac{\sup_{s\in [0, T]} \gamma(s)}{\Gamma( \alpha +1)}
(1+\|\beta\|_{L^{1}}) \psi(r) T^{\alpha} .
$$
It results that $B(\overline{B_{r}(0)})$ is an uniformly bounded
set in $X$. Now, we shall prove that $B(\overline{B_{r}(0)})$ is
an equicontinuous set in $X$. For $0 \leq t_{1} \leq t_{2} \leq T$
we have
\begin{align*}
|Bx(t_{2}) &- Bx(t_{1})| \\
& \leq \frac{1}{\Gamma( \alpha)} \Bigg\{ \Bigg| \int_{0}^{t_{2}}
(t_{2} - s)^{ \alpha-1}
g\left(s, x_{s}, \int_{0}^{s} k(\tau, x_{\tau}) d\tau \right) ds\\
& \qquad \qquad \qquad - \int_{0}^{t_{1}} (t_{1} - s)^{ \alpha-1}
g\left( s, x_{s}, \int_{0}^{s} k(\tau, x_{\tau}) d\tau \right) ds  \Bigg| \Bigg\} \\
& \leq   \frac{1}{\Gamma( \alpha)} \Bigg\{  \Bigg|
\int_{0}^{t_{1}} (t_{2} - s)^{ \alpha-1}
g\left(s, x_{s}, \int_{0}^{s} k(\tau, x_{\tau}) d\tau \right) ds \\
& \qquad \qquad \qquad + \int_{t1}^{t_{2}} (t_{2} - s)^{ \alpha-1}
g\left(s, x_{s}, \int_{0}^{s} k(\tau, x_{\tau}) d\tau \right) ds\\
& \qquad \qquad \qquad - \int_{0}^{t_{1}} (t_{1} - s)^{ \alpha-1}
g\left(s, x_{s}, \int_{0}^{s} k(\tau, x_{\tau}) d\tau \right) ds  \Bigg| \Bigg\} \\
& \leq \frac{1}{\Gamma( \alpha)} \int_{0}^{t_{1}}\left| (t_{2} -
s)^{ \alpha-1}- (t_{1} - s)^{ \alpha-1}\right|
\, \left|g \left( s, x_{s}, \int_{0}^{s} k(\tau, x_{\tau}) d\tau \right)\right| ds \\
& \qquad \qquad \qquad + \frac{1}{\Gamma( \alpha)}
\int_{t_{1}}^{t_{2}} \left| (t_{2} - s)^{ \alpha-1}\right| \left|g
\left( s, x_{s}, \int_{0}^{s} k(\tau, x_{\tau}) d\tau
\right)\right| ds.
\end{align*}
On the other hand,
\begin{align*}
\left|g\left(s, x_{s}, \int_{0}^{s} k(\tau, x_{\tau}) d\tau
\right) ds \right| & \leq \gamma(s) \psi \left( \|x\|
+ \left|\int_{0}^{s} k(\tau, x_{\tau} ) d \tau \right| \right)\\
& \leq \gamma(s) \psi\left( \|x\|+ \int_{0}^{s} \beta (\tau ) \|x\| d \tau \right) \\
& \leq \gamma(s)  \|x\| (1+  \|\beta \|_{L^{1}} ) \psi (s)\\
& \leq \sup_{s}(\gamma(s)\psi (s))  \|x\| \left(1+  \|\beta
\|_{L^{1}} \right) \leq c,
\end{align*}
where $c$ is a positive constant. Then,
\begin{align*}
|Bx(t_{2}) &-Bx(t_{1})|\\
& \leq  \frac{c}{\Gamma( \alpha)} \int_{0}^{t_{1}}\left| (t_{2} -
s)^{ \alpha-1}- (t_{1} - s)^{ \alpha-1} \right| ds +
\frac{c}{\Gamma( \alpha)} \int_{t_{1}}^{t_{2}}
\left| (t_{2} - s)^{ \alpha-1}\right| ds \\
& \leq \frac{c}{\Gamma( \alpha +1)} \left|t_{2}^{\alpha} -
t_{1}^{\alpha}-2 (t_{2}- t_{1})^{\alpha}\right|.
\end{align*}
Because the right hand side of the above inequality doesn't depend
on $x$ and tends to zero when $t_{2}\rightarrow t_{1}$, we
conclude that $B(\overline{B_{r}(0)})$ is relatively compact.
Hence, $B$ is compact by the Arzela--Ascoli theorem. It remains to
prove that $B$ is continuous. For that, let us consider a sequence
$x^{n}$ converging to $x$. Then,
\begin{align*}
&|F x^{n}(t) - Fx(t)|\\
& \leq  \Biggl| f(t, x^{n})  I^{\alpha}\left( g\left( t,
x^{n}_{t}, \int_{0}^{t}k(s, x^{n}_{s})ds \right)\right)
- f(t, x) I^{\alpha}\left( g \left( t, x_{t},
\int_{0}^{t}k(s, x_{s})ds \right)\right)\Biggr|\\
& \leq  \left|f(t, x^{n})-f(t, x)\right| I^{\alpha}\left(
\left|g \left( t, x^{n}_{t}, \int_{0}^{t}k(s, x^{n}_{s}) ds\right)\right|  \right)\\
& \qquad \quad + |f(t, x)| \Biggl|  I^{\alpha}\left( g \left( t,
x^{n}_{t}, \int_{0}^{t}k(s, x^{n}_{s}) ds\right) \right) -
I^{\alpha}\left( g \left(t,
x_{t}, \int_{0}^{t}k(s, x_{s}) ds\right) \right) \Biggr|\\
& \leq  L \|x^{n}-x\| + |f(t, x)| I^{\alpha}\Biggl( \Biggl|g
\left( t,x^{n}_{t}, \int_{0}^{t}k(s, x^{n}_{s}) ds\right) -g
\left( t,x_{t}, \int_{0}^{t}k(s, x_{s}) ds \right) \Biggr|
\Biggr).
\end{align*}
On the other hand,
\begin{align*}
I^{\alpha}&\left( \left| g\left( t, x^{n}_{t},
\int_{0}^{t}k(s,x^{n}_{s}) ds \right)
-g \left( t, x_{t}, \int_{0}^{t}k(s,x_{s}) ds \right) \right| \right) \\
& \leq  I^{\alpha}\left( L_{1}|x^{n}_{t}-x_{t}| + L_{2}
\left|\int_{0}^{t}k(s, x^{n}_{s}) ds - \int_{0}^{t}k(s,x_{s}) ds \right| \right) \\
& \leq  I^{\alpha}\left( L_{1}|x^{n}_{t}-x_{t}|
+ L_{2} \int_{0}^{t} \left| k(s, x^{n}_{s})- k(s,x_{s})\right|ds \right) \\
& \leq  I^{\alpha}\left( L_{1}\left|x^{n}_{t}-x_{t}\right|
+ L_{2}L_{k} \int_{0}^{t} \left| x^{n}_{s}-x_{s}\right|ds \right) \\
& \leq  I^{\alpha}\left( L_{1}\|x^{n}_{t}-x_{t}\|
+ L_{2}L_{k}T \| x^{n}_{s}-x_{s}\| \right) \\
& \leq  \left( L_{1}+ L_{2}L_{k}T\right)\|x^{n}_{t}-x_{t}\|
\int_{0}^{t} \frac{(t-s)^{\alpha -1}}{\Gamma(\alpha)} ds\\
& \leq  \frac{1}{\Gamma(\alpha +1)}\left( L_{1}+
L_{2}L_{k}T\right)\|x^{n}_{t}-x_{t}\|.
\end{align*}
Taking the norm, $\|Fx^{n}- Fx\|  \leq  L \|x^{n}-x\| +
\frac{1}{\Gamma(\alpha +1)} |f(t, x)| \left( L_{1}+
L_{2}L_{k}T\right)\|x^{n}_{t}-x_{t}\|$. Hence, the right hand side
of the above inequality tends to zero whenever $x^{n}\rightarrow
x$. Therefore, $Fx^{n} \rightarrow Fx$. This proves the continuity
of $F$.
\end{proof}

Using Theorem~\ref{thm1}, we obtain that either the conclusion (i)
or (ii) holds. We show that item (ii) of Theorem~\ref{thm1} cannot
be realizable. Let $x \in X$ be such that $\|x\|=r$ and $x(t)=
\lambda f(t, x(t)) I^{\alpha}\left( g\left( t,
x_{t},\int_{0}^{t}k(s, x_{s}) ds \right) \right)$ for any $\lambda
\in (0, 1)$ and $t \in I$. It follows that
\begin{align*}
|x(t)| & \leq   \lambda \left(  |f(t, x(t))- f(t, 0)|+|f(t,
0)|\right)
I^{\alpha}\left( g\left( t, x_{t}, \int_{0}^{t}k(s, x_{s})  ds \right)\right) \\
& \leq \lambda  \left( L |x(t)|+F \right) I^{\alpha}\left(
g\left( t, x_{t}, \int_{0}^{t}k(s, x_{s}) ds \right)\right) \\
& \leq  \lambda \left( L \|x\|+F \right)
I^{\alpha}\left( g \left( t, x_{t}, \int_{0}^{t}k(s, x_{s}) ds \right) \right) \\
& \leq  \left(L \|x\| + F\right) I^{\alpha}\left(
\gamma (t) \psi \left( \|x\| + \left|\int_{0}^{t} k(s, x_{s}) ds \right| \right) \right)\\
& \leq \left(L \|x\| + F\right) I^{\alpha}\left(
\gamma (t) \psi\left( \|x\| +  \|\beta\|_{L^{1}} \|x\| \right) \right)\\
& \leq  \frac{L \|x\| + F}{\Gamma( \alpha)} \int_{0}^{t}
\gamma(s) \left(1+\|\beta\|_{L^{1}}\right)
\psi\left(\|x\|\right)(t-s)^{ \alpha-1} ds \\
& \leq  \frac{L \|x\| + F}{\Gamma( \alpha)} \sup_{s \in (0, T)}
\gamma(s) \left(1+\|\beta\|_{L^{1}}\right) \psi\left( \|x\|\right)
\int_{0}^{t} (t - s)^{ \alpha-1} ds \\
& \leq  \frac{L \|x\| + F}{\Gamma( \alpha +1 )} \sup_{s \in (0,
T)}
\gamma (s) \left(1+\|\beta\|_{L^{1}}\right) \psi\left( \|x\|\right) T^{\alpha}\\
& \leq  \left( \frac{LT^{\alpha}}{\Gamma( \alpha +1 )} \sup_{s \in
(0, T)}  \gamma (s) \left(1+\|\beta\|_{L^{1}}\right)
\psi\left(\|x\|\right) \right)\|x\|\\
& \qquad \quad + \frac{F T^{\alpha}}{\Gamma( \alpha +1 )}\sup_{s
\in (0, T)} \gamma(s) \left(1+\|\beta\|_{L^{1}}\right) \psi\left(
\|x\|\right).
\end{align*}
Passing to the supremum in the above inequality, we obtain
\begin{equation}
\label{e} \|x\| \leq \frac{\frac{F T^{\alpha}}{\Gamma( \alpha +1
)}\sup_{s \in (0, T)}  \gamma (s) \left(1+\|\beta\|_{L^{1}}\right)
\psi( \|x\|)}{1-\frac{LT^{\alpha}}{\Gamma( \alpha +1 )} \sup_{s
\in (0, T)} \gamma(s) \left(1+\|\beta\|_{L^{1}}\right) \psi (
\|x\|)}.
\end{equation}
If we replace $\|x\| = r$ in \eqref{e}, we have
$$
r \leq \frac{\frac{F T^{\alpha}}{\Gamma( \alpha +1 )}\sup_{s \in
(0, T)}  \gamma (s) (1+\|\beta\|_{L^{1}} ) \psi ( r)}{ 1-
\frac{LT^{\alpha}}{\Gamma( \alpha +1 )} \sup_{s \in (0, T)} \gamma
(s) (1+\|\beta\|_{L^{1}} ) \psi(r)},
$$
which is in contradiction to \eqref{eq3}. Then the conclusion (ii)
of Theorem~\ref{thm1} is not possible. Therefore, the operator
equation $AxBx=x$ and, consequently, problem
\eqref{eq1}--\eqref{equa1}, has a solution on $I$. This ends the
proof of Theorem~\ref{thm2}.
\end{proof}

Let us see an example of application of our Theorem~\ref{thm2}.
Let $I_{0}=[-\pi, 0]$ and $I=[0, \pi]$. Consider
the integro-differential fractional equation
\begin{equation}
\label{eq6} \frac{d^{\alpha}}{dt^{\alpha}} \left(
\frac{x(t)}{1+\frac{\sin t}{12} |x(t)|} \right) =  g\left( t,
x_{t}\left(\frac{t}{2}\right), \int_{0}^{t}k(s, x_{s})ds \right) \
a.e., \quad t \in I,
\end{equation}
where $\alpha =\frac{1}{2}$ and $f: I\times \mathbb{R} \rightarrow
\mathbb{R}^{+} \setminus \{0\}$, $g: I \times C \times
\mathbb{R}\rightarrow \mathbb{R}$ and $k: I \times C\rightarrow
\mathbb{R}$ are given by
\[
f(t, x)= 1+\frac{\sin t}{12}(|x(t)|)\, , \quad g(t, x, y) =
\gamma(t) ( \|x\| + |y|) \, , \quad k(t, x)= \frac{x}{4 \pi } \, ,
\]
with $\gamma(t)=t, \beta(t)=\frac{1}{4\pi}, T=\pi, L=\frac{1}{12},
F=\sup_{t}f(t, 0)=1, \|\beta\|_{L^{1}}=\frac{1}{4}$ and $\sup_{t
\in [0, \pi] }\gamma(t)=\pi$. It is easy to see that all
hypotheses (H1)--(H5) are  satisfied with $\psi(r)=\frac{r}{12}$
for all $r \in \mathbb{R}^{+}$. By Theorem~\ref{thm2}, $r$ satisfies
$$
\frac{12-FB}{LB}\simeq 0,26 \leq r \leq \frac{12}{LB}\simeq 18,34,
$$
where $B=\frac{T^{\alpha}}{\Gamma(\alpha+1)}\sup_{s \in (0, T)}\gamma (s)
(1+\|\beta\|_{L^{1}} )$. We conclude that if $r=2$, then
\eqref{eq6} has a solution in $\overline{B_{2}(0)}$.


\section{Dependence on the data and uniqueness of solution}
\label{sec4}

In this section we derive uniqueness
of solution to \eqref{eq1}--\eqref{equa1}.

\begin{theorem}
\label{thm:unif:stab}
Let $x$ and $y$ be two solutions to the
nonlocal fractional equation \eqref{eq1} subject to
\eqref{equa1} with $\phi = \phi_{1}$ and
$\phi = \phi_{2}$, respectively. Then, we have
\begin{align*}
\|x - y \| & \leq \frac{(L \|y\| +F)(L_{1} +L_{2}L_{k} T )
\frac{T^{\alpha}}{\Gamma( \alpha +1 )}}{1-L} \| \phi_{1} -\phi_{2}
\|.
\end{align*}
\end{theorem}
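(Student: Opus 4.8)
The plan is to work with the integral formulation \eqref{eq4}: as observed in the proof of Theorem~\ref{thm2}, a solution of \eqref{eq1}--\eqref{equa1} is exactly a solution of $x=Ax\cdot Bx$ on $I$ with $A,B$ the operators defined there, while on $I_0$ one has $x-y=\phi_1-\phi_2$. Fixing $t\in I$, I would write
\[
x(t)-y(t)=\bigl(f(t,x(t))-f(t,y(t))\bigr)\,Bx(t)+f(t,y(t))\,\bigl(Bx(t)-By(t)\bigr),
\]
and treat the two pieces separately: by (H1), $|f(t,x(t))-f(t,y(t))|\le L\|x-y\|$, while $|f(t,y(t))|\le|f(t,y(t))-f(t,0)|+|f(t,0)|\le L\|y\|+F$. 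It then remains to estimate $|Bx(t)-By(t)|$ and to absorb the first term.

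For $|Bx(t)-By(t)|$ I would simply repeat the continuity computation carried out in Lemma~\ref{lem33}: apply $I^\alpha$ to $g\bigl(t,x_t,\int_0^t k(s,x_s)\,ds\bigr)-g\bigl(t,y_t,\int_0^t k(s,y_s)\,ds\bigr)$, bound it by $L_1\|x_t-y_t\|+L_2\bigl|\int_0^t k(s,x_s)\,ds-\int_0^t k(s,y_s)\,ds\bigr|$ using (H5), bound the last term by $L_k\int_0^t\|x_s-y_s\|\,ds\le L_kT\sup_{0\le s\le t}\|x_s-y_s\|$ using (H4), and estimate the fractional integral by $\int_0^t(t-s)^{\alpha-1}/\Gamma(\alpha)\,ds\le T^\alpha/\Gamma(\alpha+1)$. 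This gives
\[
|Bx(t)-By(t)|\le\frac{T^\alpha}{\Gamma(\alpha+1)}\bigl(L_1+L_2L_kT\bigr)\sup_{0\le s\le t}\|x_s-y_s\|.
\]

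The step I expect to be the main obstacle is controlling the delay norms $\|x_s-y_s\|=\sup_{\theta\in I_0}|x(s+\theta)-y(s+\theta)|$. Since $x\equiv\phi_1$ and $y\equiv\phi_2$ on $I_0$, each of them is at most $\max\{\|\phi_1-\phi_2\|,\|x-y\|\}$, so a naive substitution only produces a self-referential inequality. I would close it by a monotone-envelope argument: set $v(t)=\sup_{0\le s\le t}|x(s)-y(s)|$, observe $\sup_{0\le s\le t}\|x_s-y_s\|\le\max\{\|\phi_1-\phi_2\|,v(t)\}$, insert the bounds above into $|x(t)-y(t)|$, and use that under the standing hypotheses (recall that \eqref{eq3} forces $L\,\|B(\overline{B_r(0)})\|<1$) the accumulated coefficient stays strictly below $1$; then $v(t)\le\kappa\max\{\|\phi_1-\phi_2\|,v(t)\}$ with $\kappa<1$, which forces $v(T)\le\|\phi_1-\phi_2\|$, i.e. $\|x-y\|\le\|\phi_1-\phi_2\|$ and hence $\sup_s\|x_s-y_s\|\le\|\phi_1-\phi_2\|$. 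A last substitution then gives, for every $t\in I$,
\[
|x(t)-y(t)|\le L\|x-y\|+(L\|y\|+F)\,\frac{T^\alpha}{\Gamma(\alpha+1)}\,\bigl(L_1+L_2L_kT\bigr)\,\|\phi_1-\phi_2\|,
\]
so that taking the supremum over $t\in I$, moving $L\|x-y\|$ to the left, and dividing by $1-L$ yields the asserted estimate. Apart from the envelope argument, every step only reruns the computations of Lemmas~\ref{lem31}--\ref{lem33}; the genuine difficulty is organizing the delay terms and checking that the Lipschitz data keep the final inequality solvable for $\|x-y\|$.
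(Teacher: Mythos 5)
Your skeleton coincides with the paper's: split $x-y$ through the product $Ax\cdot Bx$, use (H1) for the $f$-difference together with $|f(t,y)|\le L\|y\|+F$, use (H5) and (H4) on $g$ and $k$, bound the kernel integral by $T^{\alpha}/\Gamma(\alpha+1)$, and absorb $L\|x-y\|$ by dividing by $1-L$. (Both you and the paper silently drop the factor $|Bx(t)|$ multiplying $L\|x-y\|$, which would require $\|Bx\|\le 1$; I will not press that point since it is inherited from the source.) The one place you genuinely depart is the treatment of the delay norms, and that is exactly where your argument has a real gap. The paper simply writes $|x_t-y_t|\le|\phi_{1}(t)-\phi_{2}(t)|$ and $\|x_s-y_s\|\le\|\phi_{1}-\phi_{2}\|$, identifying the difference of the two histories with the difference of the initial data; you correctly observe that for $s>0$ the segment $x_s-y_s$ also contains values of $x-y$ on $[0,s]$, so that substitution is self-referential, and you try to close the loop with the envelope inequality $v(T)\le\kappa\max\{\|\phi_{1}-\phi_{2}\|,v(T)\}$.

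The gap is the claim that $\kappa<1$. Your $\kappa$ is (at least) $L\,\|B(\overline{B_{r}(0)})\|+(L\|y\|+F)(L_{1}+L_{2}L_{k}T)\frac{T^{\alpha}}{\Gamma(\alpha+1)}$. Condition \eqref{eq3} controls only the first summand, and only through $\gamma$, $\psi$, $\beta$; nothing in (H1)--(H5) bounds the second summand, which involves the independent constants $L_{1}$, $L_{2}$, $L_{k}$ and can be arbitrarily large. Worse, that second summand is essentially the numerator of the bound you are trying to prove, so demanding it be $<1$ amounts to assuming a strengthened form of the conclusion. Without $\kappa<1$ the dichotomy ``either $v(T)\le\kappa v(T)$ or $v(T)\le\kappa\|\phi_{1}-\phi_{2}\|$'' yields nothing, and the self-reference does not resolve. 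The honest repairs are either to add the smallness condition as an explicit hypothesis, or to keep the $s$-dependence inside the fractional integral and run a singular (Henry--Gronwall) Gronwall argument --- but the latter produces a Mittag--Leffler factor rather than the clean constant in the statement. In short: you have put your finger on precisely the step the paper elides, but your repair does not go through under the stated hypotheses, so the proof as proposed is incomplete.
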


\begin{proof}
Let $x$ and $y$ be two solutions of \eqref{eq1}.
Then, from \eqref{eq4}, one has
\begin{align*}
|x&(t) - y(t)|  \\
&\leq  \left| f(t, x)  I^{\alpha}\left( g\left(t, x_{t},
\int_{0}^{t}k(s, x_{s})ds \right)\right) - f(t, y)
I^{\alpha}\left(g\left(t, y_{t},
\int_{0}^{t}k(s, y_{s})ds\right)\right)\right|\\
& \leq  |f(t, x)-f(t, y)| I^{\alpha}\left(
\left|g\left(t, x_{t}, \int_{0}^{t}k(s, x_{s})ds\right)\right| \right)\\
& \qquad + |f(t, y)| \left|  I^{\alpha}\left( g\left(t, x_{t},
\int_{0}^{t}k(s, x_{s})ds\right)\right)
- I^{\alpha}\left( g\left( t, y_{t}, \int_{0}^{t}k(s, y_{s})ds \right) \right) \right|\\
& \leq  L \|x-y\| + |f(t, y)| I^{\alpha}\left( \left|g \left(t,
x_{t}, \int_{0}^{t}k(s, x_{s})ds\right)-g\left(t, y_{t},
\int_{0}^{t}k(s, y_{s}) ds \right) \right|\right)
\end{align*}
for $t>0$. On the other hand, we have
\begin{align*}
\Biggl|g\biggl(t, x_{t}, &\int_{0}^{t}k(s, x_{s}) ds\biggr)
-g\left(t, y_{t}, \int_{0}^{t}k(s, y_{s}) ds\right) \Biggr| \\
& \leq  L_{1}|x_{t}-y_{t}| + L_{2}\int_{0}^{t} |k(s,x_{s})-k(s,y_{s})| ds \\
& \leq  L_{1} |\phi_{1}(t) -\phi_{2}(t)| + L_{2}L_{k} T \| \phi_{1} -\phi_{2} \|\\
& \leq \left(  L_{1} +L_{2} L_{k}T \right) \| \phi_{1} -\phi_{2}
\|.
\end{align*}
Then,
\begin{align*}
|f(t, y)| & I^{\alpha}\left(\left| g \left( t, x_{t},
\int_{0}^{t}k(s, x_{s}) ds \right)
-g \left( t, y_{t}, \int_{0}^{t}k(s, y_{s}) ds \right)\right|\right)\\
& \leq |f(t, y)| \left(  L_{1} +L_{2}L_{k} T \right) \| \phi_{1}
-\phi_{2} \|
\int_{0}^{t} \frac{(t-s)^{\alpha -1}}{\Gamma( \alpha )} ds\\
& \leq |f(t, y)| \left(  L_{1} +L_{2} L_{k} T \right) \left\|
\phi_{1} -\phi_{2}\right\| \frac{T^{\alpha}}{\Gamma( \alpha +1 )}.
\end{align*}
Taking the supremum, we conclude that
\begin{align*}
\|x &- y \|  \leq L \|x-y\| + |f(t, y)| \left(  L_{1} +L_{2}L_{k}
T \right)
\| \phi_{1} -\phi_{2} \| \frac{T^{\alpha}}{\Gamma( \alpha )}\\
& \leq  L \|x-y\| + \left(|f(t, y)-f(t,0)|+|f(t,0)| \right) \left(
L_{1} +L_{2}L_{k} T \right)
\| \phi_{1} -\phi_{2} \| \frac{T^{\alpha}}{\Gamma( \alpha +1)}\\
& \leq  L \|x-y\| + \left(L \|y\| +\sup_{t} |f(t, 0)|\right)
\left(L_{1} +L_{2}L_{k} T\right)\frac{T^{\alpha}}{\Gamma( \alpha
+1 )}
\| \phi_{1} -\phi_{2} \| \\
& \leq  L \|x-y\| +\left(L \|y\| +F\right)\left(L_{1} +L_{2}L_{k}
T\right) \frac{T^{\alpha}}{\Gamma( \alpha +1 )}\| \phi_{1}
-\phi_{2} \|.
\end{align*}
Therefore,
\begin{align*}
\|x - y \| & \leq \frac{(L \|y\| +F)(L_{1} +L_{2}L_{k} T )
\frac{T^{\alpha}}{\Gamma( \alpha +1 )}}{1-L} \| \phi_{1} -\phi_{2}
\|.
\end{align*}
\end{proof}

\begin{corollary}
\label{cor:unique}
The solution predicted by Theorem~\ref{thm2} is unique.
\end{corollary}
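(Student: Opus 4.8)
The plan is to deduce uniqueness as an immediate consequence of the continuous dependence estimate established in Theorem~\ref{thm:unif:stab}. Suppose $x$ and $y$ are both solutions to \eqref{eq1}--\eqref{equa1} provided by Theorem~\ref{thm2}; in particular both satisfy the initial condition \eqref{equa1} with the \emph{same} function $\phi$, so that in the notation of Theorem~\ref{thm:unif:stab} we may take $\phi_{1}=\phi_{2}=\phi$. Applying that theorem then gives
\[
\|x-y\| \leq \frac{(L\|y\|+F)(L_{1}+L_{2}L_{k}T)\frac{T^{\alpha}}{\Gamma(\alpha+1)}}{1-L}\,\|\phi-\phi\| = 0 ,
\]
where the prefactor is a finite constant: indeed $\|y\|\leq r<\infty$ because $y\in\overline{B_{r}(0)}$, $F=\sup_{t\in[0,T]}|f(t,0)|<\infty$ by hypothesis, and $1-L>0$ (the Lipschitz constant of $f$ being $<1$ is precisely what makes the bound of Theorem~\ref{thm:unif:stab} meaningful). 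Hence $\|x-y\|=0$, i.e.\ $x(t)=y(t)$ for all $t\in I$; together with $x=\phi=y$ on $I_{0}$ this shows the two solutions coincide, which is the assertion.

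There is essentially no obstacle here beyond book-keeping; the only point worth isolating is that the estimate of Theorem~\ref{thm:unif:stab} is informative only when $1-L>0$, so the corollary tacitly inherits from that theorem the mild requirement $L<1$ in addition to (H1)--(H5). If one prefers not to invoke the explicit prefactor at all, an equivalent route is to rerun the computation in the proof of Theorem~\ref{thm:unif:stab} with $\phi_{1}=\phi_{2}$: after taking the supremum one arrives at $\|x-y\|\leq L\|x-y\|$, and since $L<1$ this already forces $\|x-y\|=0$. Either way the argument is short and the content is entirely contained in Theorem~\ref{thm:unif:stab}.
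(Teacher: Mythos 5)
Your argument is exactly the one the paper intends: the corollary is stated as an immediate consequence of Theorem~\ref{thm:unif:stab}, obtained by taking $\phi_{1}=\phi_{2}=\phi$ so that the right-hand side of the dependence estimate vanishes. Your additional remark that the conclusion tacitly requires $L<1$ (already implicit in the division by $1-L$ in Theorem~\ref{thm:unif:stab}) is correct and worth noting, but the approach is the same as the paper's.
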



\subsection*{Acknowledgments}

This work was supported by {\it FEDER} through {\it COMPETE}
--- Operational Programme Factors of Competitiveness (``Programa
Operacional Factores de Competitividade'') and by Portuguese funds
through the {\it Center for Research and Development in
Mathematics and Applications} (University of Aveiro) and the
Portuguese Foundation for Science and Technology (``FCT ---
Funda\c{c}\~{a}o para a Ci\^{e}ncia e a Tecnologia''), within
project PEst-C/MAT/UI4106/2011 with COMPETE number
FCOMP-01-0124-FEDER-022690. The authors were also supported by the
project \emph{New Explorations in Control Theory Through Advanced
Research} (NECTAR) cofinanced by FCT, Portugal, and the
\emph{Centre National de la Recherche Scientifique et Technique}
(CNRST), Morocco.



\end{document}